\newcommand{\nexteq}{\displaybreak[0]\\ &=}
\newcommand{\nnexteq}{\notag\displaybreak[0]\\ &=}
\newcommand{\refby}[1]{&&\text{(by (\ref{eq:#1}))}}
\newtheorem{thm}{Theorem}[section]
\newtheorem{lem}[thm]{Lemma}
\newtheorem{conj}[thm]{Conjecture}
\theoremstyle{definition}
\newtheorem{dfn}[thm]{Definition}
\newtheorem{proposition}[thm]{Proposition}
\DeclareMathOperator{\ord}{ord}
\DeclareMathOperator{\Tr}{Tr}
\newcommand{\F}{\mathbb{F}}
\newcommand{\cF}{\mathcal{F}}
\title[The Brawley-Carlitz theorem]{A note on the Brawley-Carlitz theorem on irreducibility of composed products of polynomials over finite fields}
\author{Akihiro Munemasa}
\address{Research Center for Pure and Applied Mathematics\\
Graduate School of Information Sciences\\
Tohoku University}
\email{munemasa@math.is.tohoku.ac.jp}
\author{Hiroko Nakamura}
\address{Research Center for Pure and Applied Mathematics\\
Graduate School of Information Sciences\\
Tohoku University}
\date{February 7, 2016}
\keywords{finite field, composed product, irreducible polynomial}
\subjclass[2010]{11T06}
\begin{document}
\begin{abstract}
We give a new proof of the Brawley-Carlitz theorem on 
irreducibility of the composed products of irreducible polynomials.
Our proof shows that associativity of the binary operation for
the composed product is not necessary. 
We then investigate
binary operations defined by polynomial functions, and
give a sufficient condition in terms of degrees for 
the requirement in the Brawley-Carlitz theorem.
\end{abstract}

\maketitle
\section{Introduction}

For a prime power $q$, we denote by $\F_{q}$ a finite field with $q$ elements. If $m$ and $n$ are relatively prime positive integers, then the composite field of $\F_{q^{m}}$ and $\F_{q^{n}}$ is $\F_{q^{mn}}$. In fact, if $\F_{q^{m}}=\F_{q}(\alpha)$ and $\F_{q^{n}}=\F_{q}(\beta)$, then $\F_{q^{mn}}=\F_{q}(\alpha+\beta)=\F_{q}(\alpha\beta)$. In other words, both $\alpha+\beta$ and $\alpha\beta$ have minimal polynomial of degree $mn$ over $\F_{q}$. 
Brawley and Carlitz generalized this fact by introducing the method of composed products in order to construct irreducible polynomials of large degree from polynomials of lower degree. A basic material of their construction is a binary operation on a subset of $\overline{\F_{q}}$ having certain properties, 
where $\overline{\F_{q}}$ is the algebraic closure of $\F_{q}$. 
Let $G$ be a non-empty subset of $\overline{\F_{q}}$, which is invariant under the Frobenius map $\alpha\mapsto\alpha^q$.
A binary operation $\diamond : G\times G\rightarrow G$ is called a \emph{diamond product} on $G$ if 
\begin{equation}\label{eq:daia}
\sigma(\alpha\diamond\beta)=\sigma(\alpha)\diamond\sigma(\beta) 
\end{equation}
holds for all $\alpha,\beta\in G$.
Let $M_{G}[q,x]$ denote the set of all monic polynomials $f$ in $\F_{q}[x]$ such that $\deg f\geq 1$, and all of the roots of $f$ lie in $G$. 
Let $f(x)=\prod_{i=1}^{m}(x-\alpha_{i})$ and $g(x)=\prod_{i=1}^{n}(x-\beta_{i})$  
be in polynomials in 
$M_{G}[q,x]$, 
where 
$\alpha_{1},\dots,\alpha_{m}, \beta_{1},\dots,\beta_{n}\in G$. 
We define the \emph{composed product} $f\diamond g$ as
\[(f\diamond g)(x)
=\prod_{i=1}^{m}\prod_{j=1}^{n}(x-\alpha_{i}\diamond\beta_{j}).\]

\begin{thm}[{\cite[Theorem 2]{Br-Ca}}]\label{B-C}
Let $\diamond$ be a diamond product on a non-empty subset $G$ of $\overline{\F_{q}}$. Suppose that $(G,\diamond)$ is a group and let $f,g$ be polynomials in $M_{G}[q,x]$ with $\deg f=m$ and $\deg g=n$. Then the composed product $f\diamond g$ is irreducible if and only if $f$ and $g$ are both irreducible with $\gcd(m,n)=1$.
\end{thm}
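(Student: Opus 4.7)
The plan is to combine the Frobenius compatibility (\ref{eq:daia}) with cancellation in the group $(G,\diamond)$ in order to pin down exactly the degree of the minimal polynomial of $\alpha\diamond\beta$ when $\alpha,\beta$ are roots of the irreducible factors. The necessity direction is essentially formal, and the real content lies in the sufficiency direction.

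For necessity, if $f=f_{1}f_{2}$ is a nontrivial factorization, the definition of the composed product immediately yields $f\diamond g=(f_{1}\diamond g)(f_{2}\diamond g)$, so $f\diamond g$ is reducible; the same applies with $g$ in place of $f$. For the coprimality, if $d:=\gcd(m,n)>1$, then iterating (\ref{eq:daia}) shows that $\sigma^{\mathrm{lcm}(m,n)}$ fixes every root $\alpha_{i}\diamond\beta_{j}$ of $f\diamond g$. Such a root therefore lies in $\F_{q^{\mathrm{lcm}(m,n)}}$, so its minimal polynomial over $\F_{q}$ has degree at most $\mathrm{lcm}(m,n)=mn/d<mn=\deg(f\diamond g)$, contradicting irreducibility.

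For sufficiency, assume $f,g$ are irreducible with $\gcd(m,n)=1$ and fix roots $\alpha$ of $f$ and $\beta$ of $g$, so that the complete sets of roots are the Frobenius orbits $\{\sigma^{i}(\alpha):0\le i<m\}$ and $\{\sigma^{j}(\beta):0\le j<n\}$. Set $\gamma:=\alpha\diamond\beta$. Since (\ref{eq:daia}) makes the multiset of roots of $f\diamond g$ Frobenius-stable, $f\diamond g$ lies in $\F_{q}[x]$ and has $\gamma$ as one of its $mn$ roots. It therefore suffices to show $[\F_{q}(\gamma):\F_{q}]=mn$; for then the minimal polynomial of $\gamma$ over $\F_{q}$ is a monic divisor of $f\diamond g$ of the same degree, hence equals $f\diamond g$, which is thus irreducible.

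The core step, and the one I expect to be the main technical point, is to prove that $\sigma^{d}(\gamma)=\gamma$ forces $mn\mid d$. From (\ref{eq:daia}) one has $\sigma^{d}(\alpha)\diamond\sigma^{d}(\beta)=\alpha\diamond\beta$, and iterating gives $\sigma^{kd}(\alpha)\diamond\sigma^{kd}(\beta)=\alpha\diamond\beta$ for every $k\ge1$. Choosing $k=m/\gcd(m,d)$ makes $kd=\mathrm{lcm}(m,d)$ a multiple of $m$, so $\sigma^{kd}(\alpha)=\alpha$; left cancellation in the group $(G,\diamond)$ then yields $\sigma^{kd}(\beta)=\beta$, whence $n\mid\mathrm{lcm}(m,d)$, and coprimality of $m$ and $n$ forces $n\mid d$. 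A symmetric use of right cancellation gives $m\mid d$, and so $mn\mid d$. Combined with $\gamma\in\F_{q^{mn}}$ (which follows from (\ref{eq:daia}) using $\sigma^{m}(\alpha)=\alpha$ and $\sigma^{n}(\beta)=\beta$), this yields $d=mn$. The virtue of this cancellation maneuver is that it invokes only the solvability of equations in $(G,\diamond)$, so, as the abstract advertises, full associativity of $\diamond$ is not essential.
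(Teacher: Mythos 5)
Your proposal is correct and, at its core, is the same argument as the paper's: you prove necessity by noting that $\sigma^{\mathrm{lcm}(m,n)}$ fixes $\alpha\diamond\beta$, and sufficiency by combining the two cancellation laws with $\gcd(m,n)=1$ to show that no power $\sigma^{d}$ fixes $\alpha\diamond\beta$ unless $mn\mid d$ --- your choice $k=m/\gcd(m,d)$ is exactly the coprimality/B\'ezout step isolated in the paper as Lemma~\ref{lem:x1}, and your two cancellation steps are its $K\cap M=K\cap N=1$. The remaining differences are cosmetic (you finish by comparing degrees with the minimal polynomial of $\alpha\diamond\beta$ rather than rewriting $f\diamond g$ as $\prod_{g\in F}\bigl(x-g(\alpha\diamond\beta)\bigr)$ as the paper does), and, just as the paper emphasizes, your argument never uses associativity, only the cancellation properties \eqref{eq:be} and \eqref{eq:al}.
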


The purpose of this paper is to give a new proof of Theorem~\ref{B-C} with weaker hypotheses. In order to explain the weakened hypothesis, we need a definition.
For a positive integer $m$, let
\begin{align*}
\cF_{m}(q)&=\{\alpha\in\F_{q^{m}}\mid\F_{q}(\alpha)=
\F_{q^{m}}\}.
\end{align*}
Clearly, 
\begin{equation}\label{honghai}
\cF_{kl}(q)\subset\cF_{l}(q^{k})
\end{equation}
for positive integers $k,l$.

\begin{dfn}
Let $\diamond$ be a diamond product on a subset $G\subset\overline{\F_{q}}$ containing $\cF_{m}(q)\cup\cF_{n}(q)$. We say that $\diamond$ satisfies \emph{weak cancellation} on $\cF_{m}(q)\times\cF_{n}(q)$, if
\begin{align}
\alpha\diamond\beta=\alpha\diamond\beta'\Longrightarrow\beta=\beta',  \label{eq:be}\\
\alpha\diamond\beta=\alpha'\diamond\beta\Longrightarrow\alpha=\alpha' \label{eq:al}
\end{align}
for all $\alpha,\alpha'\in\cF_{m}(q)$ and $\beta,\beta'\in\cF_{n}(q)$. 
\end{dfn}

We will show in Section~\ref{sec:BC} that the conclusion of Theorem~\ref{B-C} holds if $\diamond$ satisfies weak cancellation on $\cF_{m}(q)\times\cF_{n}(q)$. In other words, associativity of the product $\diamond$ is unnecessary. In 
Section~\ref{sec:PF}, we consider a diamond product defined by a polynomial function, and show that such a diamond product satisfies weak cancellation if the degree is small (see Theorem~\ref{jisucan} for details).
In Section~\ref{sec:mirai}, the optimality of the degree bound for weak cancellation is investigated. This leads us to a conjecture on the existence of irreducible polynomials all of whose coefficients except the constant term belong to the prime field.

\section{The Brawley-Carlitz theorem}\label{sec:BC}

Throughout this paper, we let $q$ be a prime power, and $\sigma: \overline{\F_{q}}\rightarrow \overline{\F_{q}}$ denote the Frobenius map $\alpha \mapsto\alpha^q$. For positive integers $k$ and $r$, we denote by $\ord_{k}(r)$ the multiplicative order of $r$ modulo $k$. 
For a nonzero $\alpha\in\overline{\F_{q}}$, we denote by $|\alpha|$ the multiplicative order of $\alpha$. 
Then (see, for example, {\cite[Corollary 2.15]{FF}}), we have,
for $m>1$,
\begin{align*}
\cF_{m}(q)&=
\{\alpha\in\F_{q^{m}}\mid\alpha,\sigma(\alpha),\dots,\sigma^{m-1}(\alpha): \text{pairwise distinct}\}
\nexteq
\{\alpha\in\F_{q^{m}}\mid\{l\in\mathbb{Z}\mid\sigma^{l}(\alpha)=\alpha\}=m\mathbb{Z}\}
\nexteq
\{\alpha\in\F_{q^{m}}\mid\alpha\neq0,\;\ord_{|\alpha |}(q)=m\}.
\end{align*}

Our proof of the Brawley-Carlitz theorem relies on the following lemma 
in group theory. 

\begin{lem}\label{lem:x1}
Let $\Gamma$ be a finite group of order $mn$ having subgroups
$M$ and $N$ of order $m$ and $n$, respectively.
Assume $\Gamma=M\times N$ and $(m,n)=1$.
If $K$ is a subgroup of $\Gamma$, then
$K=(K\cap M)(K\cap N)$.
\end{lem}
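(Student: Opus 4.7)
The plan is to show $K\supseteq (K\cap M)(K\cap N)$ (which is trivial, since $K\cap M$ and $K\cap N$ are subsets of the subgroup $K$) and then the nontrivial inclusion $K\subseteq (K\cap M)(K\cap N)$. So the whole task reduces to the following: given $k\in K$, write $k=xy$ using the direct product structure $\Gamma=M\times N$, where $x\in M$ and $y\in N$ are uniquely determined, and prove that both $x$ and $y$ actually lie in $K$.

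First I would observe that since $x\in M$ and $y\in N$ commute, we have $k^r=x^r y^r$ for every integer $r$. Moreover $|x|$ divides $|M|=m$ and $|y|$ divides $|N|=n$, so $\gcd(|x|,|y|)=1$. By the Chinese Remainder Theorem I would then pick an integer $a$ with
\[
a\equiv 0\pmod{|x|},\qquad a\equiv 1\pmod{|y|},
\]
which yields $k^a=x^a y^a=1\cdot y=y$. Similarly I choose $b$ with $b\equiv 1\pmod{|x|}$ and $b\equiv 0\pmod{|y|}$, so that $k^b=x$. Since $k\in K$, powers of $k$ lie in $K$, hence $x\in K\cap M$ and $y\in K\cap N$, giving $k=xy\in (K\cap M)(K\cap N)$, as required.

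I do not expect any serious obstacle; the only slightly subtle point is making sure that the direct product decomposition is being used in full strength, namely that elements of $M$ and $N$ commute and that $M\cap N=\{1\}$, so that the order of $xy$ equals $|x|\cdot|y|$ and the coprimality argument via the Chinese Remainder Theorem applies cleanly. Everything else is bookkeeping.
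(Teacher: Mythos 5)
Your proof is correct and follows essentially the same idea as the paper: both arguments express the direct-product components $x$ and $y$ as suitable powers of the element $k=z\in K$, using coprimality of $m$ and $n$. The paper does this with a Bezout relation $rm+sn=1$ (so $z^{sn}=x$ and $z^{rm}=y$), while you use the Chinese Remainder Theorem on the element orders $|x|$ and $|y|$; this is only a cosmetic difference.
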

\begin{proof}
Since $(m,n)=1$, there exist integers
$r,s$ such that 
$rm+sn=1$.
Let $z\in K$. Since $\Gamma=M\times N$, there exist
$x\in M$ and $y\in N$ such that 
$z=xy$.
Then $z=z^{sn}z^{rm}$ with $z^{sn}=x^{sn}\in K\cap M$, $z^{rm}=y^{rm}\in K\cap N$.
Since $z\in K$ was arbitrary, we conclude
$K\subset(K\cap M)(K\cap N)$. Since the reverse
containment is obvious, we obtain the desired result.
\end{proof}

\begin{thm}
\label{B-CS}
Suppose $G$ is a non-empty subset of $\overline{\F_{q}}$.
Let $\diamond$ be a diamond product on $G$ 
satisfying $(\ref{eq:be})$ and $(\ref{eq:al})$. 
Let $f, g\in M_{G}[q,x]$, 
$\deg f=m$ and $\deg g=n$. Then the following are equivalent: 
\begin{enumerate}
\item $f\diamond g$ is irreducible in $\F_{q}[x]$, 
\item $f$ and $g$ are irreducible in $\F_{q}[x]$, and $\gcd(m,n)=1$. 
\end{enumerate}
\end{thm}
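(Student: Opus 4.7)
My plan is to prove the two implications separately. The direction (i)$\Rightarrow$(ii) uses only the product formula defining $f\diamond g$ together with (\ref{eq:daia}); the substantive direction (ii)$\Rightarrow$(i) is where Lemma~\ref{lem:x1} and the weak cancellation hypotheses enter.

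For (i)$\Rightarrow$(ii), I first observe that any nontrivial factorization $f=f_{1}f_{2}$ in $\F_{q}[x]$ (with $f_{i}\in M_{G}[q,x]$) yields $f\diamond g=(f_{1}\diamond g)(f_{2}\diamond g)$ directly from the product formula, with both factors of positive degree; so irreducibility of $f\diamond g$ forces $f$, and symmetrically $g$, to be irreducible in $\F_{q}[x]$. With $f,g$ irreducible, I fix roots $\alpha$ of $f$ and $\beta$ of $g$; iteration of (\ref{eq:daia}) places $\alpha\diamond\beta$ in $\F_{q^{\operatorname{lcm}(m,n)}}$, so the minimal polynomial of $\alpha\diamond\beta$---which equals $f\diamond g$---has degree dividing $\operatorname{lcm}(m,n)$, and since this degree is $mn$ we conclude $\operatorname{lcm}(m,n)=mn$, i.e., $\gcd(m,n)=1$.

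For (ii)$\Rightarrow$(i), I fix roots $\alpha\in\cF_{m}(q)$ of $f$ and $\beta\in\cF_{n}(q)$ of $g$ and aim to show that $\gamma:=\alpha\diamond\beta$ has degree $mn$ over $\F_{q}$; since $\gamma$ is a root of the monic polynomial $f\diamond g\in\F_{q}[x]$ of degree $mn$, this will force $f\diamond g$ to be the minimal polynomial of $\gamma$ and hence irreducible. I consider the cyclic group $\Gamma=\langle\sigma\rangle/\langle\sigma^{mn}\rangle$ of order $mn$, which acts on $\cF_{m}(q)\cup\cF_{n}(q)\cup\{\gamma\}$ since $\sigma^{mn}$ fixes each of $\alpha$, $\beta$, and $\gamma$. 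The subgroups $M=\langle\sigma^{n}\rangle/\langle\sigma^{mn}\rangle$ and $N=\langle\sigma^{m}\rangle/\langle\sigma^{mn}\rangle$ have orders $m$ and $n$ respectively, and $\Gamma=M\times N$ by coprimality. Letting $K$ be the stabilizer of $\gamma$ in $\Gamma$, Lemma~\ref{lem:x1} yields $K=(K\cap M)(K\cap N)$, so it suffices to prove each intersection is trivial.

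The main obstacle, and the point at which weak cancellation enters, is showing triviality of $K\cap M$ and $K\cap N$. For $\tau\in K\cap M$, say $\tau=\sigma^{kn}$, the fact that $\sigma^{n}$ fixes $\beta$ gives $\tau(\beta)=\beta$, while $\tau(\gamma)=\gamma$ combined with iterated (\ref{eq:daia}) gives $\tau(\alpha)\diamond\beta=\alpha\diamond\beta$; now (\ref{eq:al}) forces $\tau(\alpha)=\alpha$, and since $\alpha\in\cF_{m}(q)$ this yields $m\mid kn$, hence $m\mid k$ by coprimality, so $\tau$ is trivial in $\Gamma$. The argument for $K\cap N$ is symmetric, using (\ref{eq:be}). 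The essential role of Lemma~\ref{lem:x1} is that weak cancellation does not directly imply injectivity of $(\alpha',\beta')\mapsto\alpha'\diamond\beta'$ on $\cF_{m}(q)\times\cF_{n}(q)$; the lemma reduces an arbitrary stabilizer element to a product of elements each fixing one of $\alpha$ or $\beta$, which is precisely the regime in which the cancellation laws apply.
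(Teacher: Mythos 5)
Your proof is correct and follows essentially the same route as the paper: the same stabilizer subgroup $K$ inside $\langle\sigma\rangle$ of order $mn$ with $M=\langle\sigma^n\rangle$, $N=\langle\sigma^m\rangle$, triviality of $K\cap M$ and $K\cap N$ via (\ref{eq:al}) and (\ref{eq:be}), and Lemma~\ref{lem:x1} to get $K=1$. The only differences are cosmetic (working in the quotient $\langle\sigma\rangle/\langle\sigma^{mn}\rangle$ rather than restricting $\sigma$ to $\F_{q^{mn}}$, and concluding that $f\diamond g$ is the minimal polynomial of $\alpha\diamond\beta$ instead of rewriting it explicitly as $\prod_{g\in F}(x-g(\alpha\diamond\beta))$).
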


\begin{proof}
(i)$\implies$(ii).
Since $(f\diamond g)(x)$ is irreducible, clearly $f(x)$ and
$g(x)$ are irreducible. Let $\alpha$ and $\beta$ be roots
of $f(x)$ and $g(x)$, respectively. Then $\alpha\diamond\beta$
is a root of $(f\diamond g)(x)$ which is an irreducible polynomial
of degree $mn$. This implies
$\ord_{|\alpha\diamond\beta|}(q)=mn$.
Let $\ell$ be the least common multiple
of $m$ and $n$. Then 
$\sigma^\ell(\alpha\diamond\beta)=
\sigma^\ell(\alpha)\diamond\sigma^\ell(\beta)
=\alpha\diamond\beta$.
Thus $\ord_{|\alpha\diamond\beta|}(q)$ divides $\ell$,
and hence, $\ell=mn$. This implies $\gcd(m,n)=1$.

(ii)$\implies$(i)
Let $\alpha\in\cF_{m}(q)$ and $\beta\in\cF_{n}(q)$ are roots of $f$ and $g$, respectively, so that 
\begin{align*}
f(x)&=\prod_{i=0}^{m-1}(x-\sigma^i(\alpha)),\\
g(x)&=\prod_{i=0}^{n-1}(x-\sigma^i(\beta)).
\end{align*}
The Frobenius automorphism 
$\sigma$ generates
the group $F=\langle\sigma\rangle$ of order
$mn$ acting on $\F_{q^{mn}}$. Moreover, setting
\begin{align}
M&=\langle\sigma^n\rangle=\{g\in F\mid g(\beta)=\beta\},\label{eq:Na}\\
N&=\langle\sigma^m\rangle=\{g\in F\mid g(\alpha)=\alpha\},\label{eq:Mb}
\end{align}
we have $|M|=m$ and $|N|=n$, so 
\begin{equation}\label{eq:MN}
F=M\times N.
\end{equation}
Observe
\begin{align}
M\cdot\alpha&=MN\cdot\alpha
\refby{Na}
\nnexteq
F\cdot\alpha
\refby{MN}
\nnexteq
\{\sigma^i(\alpha)\mid 0\leq i<m\},
\label{eq:Ma}
\end{align}
and similarly
\begin{equation}\label{eq:Nb}
N\cdot\beta=\{\sigma^j(\beta)\mid 0\leq j<n\}.
\end{equation}
Let 
\[K=\{g\in F\mid g(\alpha\diamond\beta)=\alpha\diamond\beta\}.\]
Then 
\begin{equation}\label{eq:GK}
|F\cdot(\alpha\diamond\beta)|=|F:K|.
\end{equation}

We claim $K\cap M=K\cap N=1$. Indeed,
if $g\in K\cap M$, then
\begin{align*}
\alpha\diamond\beta&=g(\alpha\diamond\beta)
\nexteq
g(\alpha)\diamond g(\beta)
\refby{daia}
\nexteq
g(\alpha)\diamond \beta,
\end{align*}
so $\alpha=g(\alpha)$ by (\ref{eq:al}). This implies $g\in N$. 
Since $g\in M$ and $M\cap N=1$, we conclude $g=1$.
This proves $K\cap M=1$. Similarly, we can prove $K\cap N=1$
using (\ref{eq:be}).

Now, by Lemma~\ref{lem:x1}, we obtain $K=1$. This implies
$|F\cdot(\alpha\diamond\beta)|=|F|$ by (\ref{eq:GK}). Therefore,
the polynomial
\begin{align*}
(f\diamond g)(x)&=
\prod_{i=0}^{m-1}\prod_{j=0}^{n-1}(x-\sigma^i(\alpha)\diamond\sigma^j(\beta))
\nexteq
\prod_{g\in M}\prod_{h\in N}(x-g(\alpha)\diamond h(\beta))
&&\text{(by (\ref{eq:Ma}), (\ref{eq:Nb}))}
\nexteq
\prod_{g\in M}\prod_{h\in N}(x-gh(\alpha)\diamond gh(\beta))
\nexteq
\prod_{g\in F}(x-g(\alpha)\diamond g(\beta))
\refby{MN}
\nexteq
\prod_{g\in F}(x-g(\alpha\diamond\beta))
\refby{daia}
\end{align*}
is irreducible over $\F_q$.
\end{proof}

\section{Diamond products defined by polynomial functions}\label{sec:PF}

Stichtenoth \cite{Stichten} classified associative diamond products defined by a polynomial function, under a certain condition. 
As we have seen in the previous section, associativity is irrelevant for the Brawley-Carlitz theorem. This prompts us to classify diamond products satisfying weak cancellation  instead. In this section, we consider diamond products defined by a polynomial function, and give a sufficient condition in terms of degrees in order that the associated diamond product satisfies weak cancellation. 
It turns out that, in general, a wider class of polynomials than those classified in  \cite{Stichten} can be used as a diamond product.

Let $m$ be a positive integer, and let $\psi:\cF_{m}(q)\rightarrow\cF_{m}(q)$ be a function. We say that $\psi$ satisfies the \emph{restricted injectivity} on $\cF_{m}(q)$ if, for all $\alpha\in\cF_{m}(q)$ and $k\in\mathbb{Z}$, 
\begin{equation}
\psi(\alpha)=\psi(\sigma^{k}(\alpha))\Longrightarrow\alpha=\sigma^{k}(\alpha).
\label{eq:(1)}
\end{equation}
If $\psi:\cF_{m}(q)\rightarrow\overline{\F_{q}}$ is a function taking values in  $\F_{q^{m}}$ such that $\psi$ commutes with $\sigma$, then 
$\psi(\sigma^{k}(\alpha))=\sigma^{k}(\psi(\alpha))$. Thus, (\ref{eq:(1)}) is equivalent to 
\begin{equation}\label{eq:douchi}
\psi(\alpha)\in\cF_{m}(q).
\end{equation}
In particular, this equivalence holds when $\psi$ is a polynomial function with coefficients in $\F_{q}$.

\begin{lem}\label{degjisu}
Let $\psi(x)\in\F_{q}[x]$ be a polynomial with $\deg\psi\geq 1$.  
Then for $\alpha\in\overline{\F_{q}}$, 
\[
\deg\psi\geq[\F_{q}(\alpha):\F_{q}(\psi(\alpha))].
\]
\end{lem}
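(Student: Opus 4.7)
The plan is to pass through the intermediate field $\F_q(\psi(\alpha))$ and exhibit $\alpha$ explicitly as a root of a polynomial over that field whose degree is $\deg \psi$. Concretely, set $\beta = \psi(\alpha)$, so that $\beta \in \F_q(\alpha)$ and consequently $\F_q(\beta)(\alpha) = \F_q(\alpha)$.

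Next, I would consider the polynomial $\psi(x) - \beta \in \F_q(\beta)[x]$. Because $\psi$ has coefficients in $\F_q$ and $\deg \psi \geq 1$, this polynomial is nonzero (in fact, its leading term coincides with that of $\psi$) and has degree exactly $\deg \psi$. By construction, $\alpha$ is a root of $\psi(x) - \beta$, so the minimal polynomial of $\alpha$ over $\F_q(\beta)$ divides $\psi(x) - \beta$ and therefore has degree at most $\deg \psi$. This gives
\[
[\F_q(\alpha):\F_q(\beta)] = [\F_q(\beta)(\alpha):\F_q(\beta)] \leq \deg \psi,
\]
which is the required inequality.

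There is no serious obstacle here; the only thing to double-check is that $\psi(x) - \beta$ is genuinely a polynomial of degree $\deg \psi$ over $\F_q(\beta)$, which follows from the fact that only the constant term is affected by the subtraction. The argument does not use anything specific to finite fields other than that $\F_q \subset \F_q(\beta)$, so no Frobenius considerations enter at this stage.
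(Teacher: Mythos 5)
Your argument is correct and is essentially identical to the paper's proof: both consider $\psi(x)-\psi(\alpha)$ over $\F_q(\psi(\alpha))$, observe that $\alpha$ is a root, and bound the degree of the minimal polynomial of $\alpha$ over that field by $\deg\psi$. Nothing further is needed.
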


\begin{proof}
Let $\psi_{0}(x)=\psi(x)-\psi(\alpha)\in\F_{q}(\psi(\alpha))[x]$. Then $\psi_{0}(\alpha)=0$, so $\psi_{0}$ is divisible by the minimal polynomial of $\alpha$ over $\F_{q}(\psi(\alpha))$. This implies 
\begin{align*}
[\F_{q}(\alpha):\F_{q}(\psi(\alpha))]&\leq\deg\psi_{0}
\nexteq\deg\psi. 
\end{align*}
\end{proof}

\begin{lem}
Let $m>1$ be an integer, and let $m_{1}$ be the smallest prime divisor of $m$. 
If $\psi(x)\in\F_{q}[x]$ is a monic polynomial with $0<\deg\psi<m_{1}$, then the function defined by $\psi$ satisfies the restricted injectivity on $\cF_{m}(q)$. 
\end{lem}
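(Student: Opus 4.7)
The plan is to reduce the restricted injectivity condition to the simpler statement $\psi(\alpha)\in\cF_m(q)$ for all $\alpha\in\cF_m(q)$, via the equivalence between (\ref{eq:(1)}) and (\ref{eq:douchi}) noted just before the lemma. Since $\psi\in\F_q[x]$ commutes with $\sigma$ and takes $\cF_m(q)\subset\F_{q^m}$ into $\F_{q^m}$, this equivalence applies, so it suffices to show that $\F_q(\psi(\alpha))=\F_{q^m}$ whenever $\alpha\in\cF_m(q)$.

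Given $\alpha\in\cF_m(q)$, I would set $d=[\F_q(\psi(\alpha)):\F_q]$. Because $\psi(\alpha)\in\F_{q^m}$, we have $d\mid m$, and since $\alpha\in\cF_m(q)$ means $[\F_q(\alpha):\F_q]=m$, tower multiplicativity gives $[\F_q(\alpha):\F_q(\psi(\alpha))]=m/d$. Lemma~\ref{degjisu} then supplies the crucial inequality
\[
\frac{m}{d}=[\F_q(\alpha):\F_q(\psi(\alpha))]\leq\deg\psi<m_1.
\]

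The final step is a divisibility argument using the minimality of $m_1$. Since $d\mid m$, the quotient $m/d$ is itself a positive divisor of $m$. If $m/d>1$, then $m/d$ would have a prime divisor, which necessarily is at least $m_1$, contradicting $m/d<m_1$. Therefore $m/d=1$, that is, $d=m$, so $\F_q(\psi(\alpha))=\F_{q^m}$ and hence $\psi(\alpha)\in\cF_m(q)$, as required.

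I do not expect any real obstacle here; the only substantive ingredient is Lemma~\ref{degjisu}, and the rest is a short divisibility observation. The role of the hypothesis $\deg\psi<m_1$ (rather than just $\deg\psi<m$) is precisely to force the divisor $m/d$ of $m$ to be $1$, which is exactly what pins $\psi(\alpha)$ inside $\cF_m(q)$ rather than merely in some intermediate subfield of $\F_{q^m}$.
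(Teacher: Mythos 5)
Your proposal is correct and follows essentially the same route as the paper: apply Lemma~\ref{degjisu} to bound $[\F_{q^m}:\F_q(\psi(\alpha))]$ by $\deg\psi<m_1$, use that this degree is a divisor of $m$ together with minimality of $m_1$ to force it to be $1$, and then invoke the stated equivalence of (\ref{eq:(1)}) with (\ref{eq:douchi}). Your explicit introduction of $d$ and the tower formula is just a slightly more verbose phrasing of the same argument.
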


\begin{proof}
For $\alpha\in\cF_{m}(q)$, we have 
\begin{align*}
m_{1}&>\deg\psi
\\&\geq
[\F_{q}(\alpha):\F_{q}(\psi(\alpha))]
&&\text{(by Lemma~\ref{degjisu})}
\nexteq
[\F_{q^{m}}:\F_{q}(\psi(\alpha))]. 
\end{align*}
Since $[\F_{q^{m}}:\F_{q}(\psi(\alpha))]$ is a divisor of $m$ and $m_{1}$ is the smallest prime divisor of $m$, we conclude that $[\F_{q^{m}}:\F_{q}(\psi(\alpha))]=1$, that is, $\F_{q}(\psi(\alpha))=\F_{q^{m}}$. This implies (\ref{eq:douchi}).  
\end{proof}

\begin{lem}\label{dokuritu}
Let $m_{1}$ be the smallest prime divisor of a positive integer $m>1$, and let $k$ be an integer not divisible by $m$. Then for $\alpha\in\cF_{m}(q)$,
$\alpha-\sigma^{k}(\alpha),\alpha^{2}-\sigma^{k}(\alpha^{2}),\dots,
\alpha^{m_{1}-1}-\sigma^{k}(\alpha^{m_{1}-1})$ are linearly independent over $\F_{q}$.
\end{lem}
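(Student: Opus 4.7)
The plan is to translate a linear dependence into a statement that some nonzero polynomial value $\psi(\alpha)$ lies in a proper subfield of $\F_{q^{m}}$, and then to use Lemma~\ref{degjisu} to contradict the degree bound $\deg\psi<m_{1}$.

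More concretely, suppose for contradiction that there exist $c_{1},\dots,c_{m_{1}-1}\in\F_{q}$, not all zero, with
\[
\sum_{i=1}^{m_{1}-1}c_{i}\bigl(\alpha^{i}-\sigma^{k}(\alpha^{i})\bigr)=0.
\]
Set $\psi(x)=\sum_{i=1}^{m_{1}-1}c_{i}x^{i}\in\F_{q}[x]$; then $1\le\deg\psi\le m_{1}-1$ (or $\psi$ has some positive-degree terms, see below). Since the coefficients of $\psi$ lie in $\F_{q}$, the Frobenius $\sigma^{k}$ commutes with $\psi$, so the assumption rewrites as
\[
\psi(\alpha)=\sigma^{k}(\psi(\alpha)),
\]
i.e.\ $\psi(\alpha)$ lies in the fixed field of $\sigma^{k}$ acting on $\F_{q^{m}}$, which is $\F_{q^{\gcd(k,m)}}$. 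Because $m\nmid k$, the quantity $d:=\gcd(k,m)$ is a proper divisor of $m$, and therefore $d\le m/m_{1}$.

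Now split into two cases. If $\psi(\alpha)\neq 0$, then $\F_{q}(\psi(\alpha))$ is a subfield of $\F_{q^{d}}$, so $[\F_{q^{m}}:\F_{q}(\psi(\alpha))]$ is a multiple of $m/d\ge m_{1}$. Combining this with Lemma~\ref{degjisu} yields
\[
m_{1}\le[\F_{q^{m}}:\F_{q}(\psi(\alpha))]=[\F_{q}(\alpha):\F_{q}(\psi(\alpha))]\le\deg\psi\le m_{1}-1,
\]
a contradiction. If instead $\psi(\alpha)=0$, then the minimal polynomial of $\alpha$ over $\F_{q}$, which has degree $m>m_{1}-1\ge\deg\psi$, divides $\psi$, forcing $\psi=0$ and hence $c_{1}=\dots=c_{m_{1}-1}=0$, contrary to the choice of the $c_{i}$.

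The only mildly delicate step is the passage from the linear relation to the fixed-field statement $\psi(\alpha)=\sigma^{k}(\psi(\alpha))$; after that, the bookkeeping with subfields of $\F_{q^{m}}$ together with Lemma~\ref{degjisu} handles itself. The main structural observation to have in mind is that proper subfields of $\F_{q^{m}}$ all sit inside $\F_{q^{m/m_{1}}}$, which is exactly what forces the gap with $\deg\psi<m_{1}$.
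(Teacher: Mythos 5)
Your proof is correct and essentially the same as the paper's: both rewrite the dependence relation as $\psi(\alpha)=\sigma^{k}(\psi(\alpha))$, place $\psi(\alpha)$ in $\F_{q^{\gcd(k,m)}}$, and derive a degree contradiction from $\deg\psi<m_{1}$ together with the smallest-prime-divisor property (the paper packages the final step by taking the minimal polynomial of $\alpha$ over $\F_{q^{\gcd(k,m)}}$ instead of invoking Lemma~\ref{degjisu}, and your case split on $\psi(\alpha)=0$ is harmless but unnecessary, since Lemma~\ref{degjisu} covers that case too). One small caveat: your closing remark that every proper subfield of $\F_{q^{m}}$ sits inside $\F_{q^{m/m_{1}}}$ is false as a field containment (e.g.\ $\F_{q^{4}}\subset\F_{q^{12}}$ is not contained in $\F_{q^{6}}$); what your argument actually uses, and what is true, is the numerical bound $m/d\ge m_{1}$ for any proper divisor $d$ of $m$.
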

\begin{proof}
Suppose $\alpha-\sigma^{k}(\alpha), \alpha^{2}-\sigma^{k}(\alpha^{2}), \dots, 
\alpha^{m_{1}-1}-\sigma^{k}(\alpha^{m_{1}-1})$ are linearly dependent. 
Then there exist $a_{1},\dots,a_{m_{1}-1}\in\F_{q}$, 
$(a_{1},\dots,a_{m_{1}-1})\neq(0,\dots,0)$, 
and $$\sum_{i=1}^{m_{1}-1}a_{i}(\alpha^{i}-\sigma^{k}(\alpha^{i}))=0.$$ 
Let 
\begin{align*}
a_0&=\sum_{i=1}^{m_{1}-1}a_{i}\alpha^{i}\in\F_{q^{m}}, \\
f(x)&=\sum_{i=1}^{m_{1}-1}a_{i}x^{i}-a_{0}\in\F_{q}(a_{0})[x]. 
\end{align*}
Then $\sigma^k(a_{0})=a_0$, so $f\in\F_{q^{\gcd(k,m)}}[x]$. 
Since $f(\alpha)=0$, $f$ is divisible by the minimal polynomial of $\alpha$ over $\F_{q^{\gcd(k,m)}}$. 
This implies 
\begin{align*}
m_{1}&>
\deg f
\\&\geq 
[\F_{q^{\gcd(k,m)}}(\alpha):\F_{q^{\gcd(k,m)}}]
\nexteq
[\F_{q^{m}}:\F_{q^{\gcd(k,m)}}]
\nexteq
\frac{m}{\gcd(k,m)}.
\end{align*}
Since $m_{1}$ is the smallest prime divisor of $m$, we obtain 
$\gcd(k,m)=m$, that is, $m\mid k$. This is a contradiction. 
\end{proof}

\begin{lem}\label{lem:e3b}
If $m$ and $n$ are relatively prime positive integers, then
$\cF_n(q)\subset\cF_n(q^m)$.
In particular, if $\alpha\in\cF_{m}(q)$, $\beta\in\cF_{n}(q)$, $k\in\mathbb{Z}$ and 
\begin{equation*}
\varphi(x,y)=\sum_{i=0}^{n-1}\psi_{i}(x)y^{i}\in\F_{q}[x,y]
\end{equation*}
satisfy $\varphi(\sigma^{k}(\alpha),\beta)=\varphi(\alpha,\beta)$, then $\psi_{i}(\sigma^{k}(\alpha))=\psi_{i}(\alpha)$ for $0\leq i\leq n-1$.
\end{lem}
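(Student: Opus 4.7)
The plan has two parts corresponding to the two assertions of the lemma.

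First I would prove the inclusion $\cF_n(q)\subset\cF_n(q^m)$. Take $\alpha\in\cF_n(q)$, so $\F_q(\alpha)=\F_{q^n}$. Then $\alpha\in\F_{q^n}\subset\F_{q^{mn}}$, and the compositum $\F_{q^m}(\alpha)=\F_{q^m}\cdot\F_{q^n}$ equals $\F_{q^{\mathrm{lcm}(m,n)}}=\F_{q^{mn}}$ because $\gcd(m,n)=1$. Hence $[\F_{q^m}(\alpha):\F_{q^m}]=n$, so $\alpha\in\cF_n(q^m)$. (Alternatively, one can argue via $\F_q(\alpha)\cap\F_{q^m}=\F_{q^{\gcd(n,m)}}=\F_q$ and the standard degree-of-compositum identity.)

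For the ``in particular'' part, I would apply the inclusion just established to $\beta\in\cF_n(q)$, obtaining $\beta\in\cF_n(q^m)$. This is the key input: the minimal polynomial of $\beta$ over $\F_{q^m}$ has degree exactly $n$. Now rewrite the hypothesis $\varphi(\sigma^{k}(\alpha),\beta)=\varphi(\alpha,\beta)$ as
\[
\sum_{i=0}^{n-1}\bigl(\psi_i(\sigma^{k}(\alpha))-\psi_i(\alpha)\bigr)\beta^{i}=0.
\]
Since $\psi_i\in\F_q[x]$ and $\alpha,\sigma^k(\alpha)\in\F_{q^m}$, each coefficient $c_i:=\psi_i(\sigma^k(\alpha))-\psi_i(\alpha)$ lies in $\F_{q^m}$. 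Thus $\sum_{i=0}^{n-1}c_i y^i\in\F_{q^m}[y]$ is a polynomial of degree strictly less than $n$ that vanishes at $\beta$, and since the minimal polynomial of $\beta$ over $\F_{q^m}$ has degree $n$, this forces $c_i=0$ for all $i$, i.e., $\psi_i(\sigma^{k}(\alpha))=\psi_i(\alpha)$.

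There is no serious obstacle: the only subtle point is that the natural coefficient field for the polynomial identity in $y$ is $\F_{q^m}$ rather than $\F_q$, and this is precisely why we need the coprimality of $m$ and $n$ to upgrade ``$\F_q$-minimal polynomial of $\beta$ has degree $n$'' to ``$\F_{q^m}$-minimal polynomial of $\beta$ has degree $n$.'' Once the first assertion is in place, the second is immediate from comparing degrees.
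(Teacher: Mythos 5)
Your proof is correct and follows essentially the same route as the paper: the second assertion is exactly the paper's argument (coefficients $\psi_i(\sigma^k(\alpha))-\psi_i(\alpha)$ lie in $\F_{q^m}$, and linear independence of $1,\beta,\dots,\beta^{n-1}$ over $\F_{q^m}$ forces them to vanish). The only difference is that you prove the inclusion $\cF_n(q)\subset\cF_n(q^m)$ directly via the compositum, whereas the paper simply cites \cite[Corollary 3.47]{FF}; your elementary argument is a fine substitute.
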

\begin{proof}
The first part is immediate from \cite[Corollary 3.47]{FF}.
Since $\varphi(\sigma^{k}(\alpha),\beta)=\varphi(\alpha,\beta)$, we have 
\begin{equation*}
\sum_{i=0}^{n-1}(\psi_{i}(\sigma^{k}(\alpha))-\psi_{i}(\alpha))\beta^{i}=0. 
\end{equation*}
Since $\beta\in\cF_{n}(q)\subset\cF_{n}(q^{m})$ and $\psi_{i}(\sigma^{k}(\alpha))-\psi_{i}(\alpha)\in\F_{q^{m}}$, linear independence of $1,\beta,\dots, \beta^{n-1}$ over $\F_{q^{m}}$ shows $\psi_{i}(\sigma^{k}(\alpha))=\psi_{i}(\alpha)$ for $0\leq i\leq n-1$.
\end{proof}

\begin{thm}\label{jisucan}
Let $q$ be a prime power, and let $m,n>1$ be relatively prime positive integers.
Suppose $m_{1}$ is the smallest prime divisor of $m$, 
$n_{1}$ is the smallest prime divisor of $n$.
Let $\varphi(x,y)\in\F_{q}[x,y]$ be a polynomial with  
$0<\deg_{x}\varphi<m_{1}$ and $0<\deg_{y}\varphi<n_{1}$. Then 
the diamond product on $\overline{\F_{q}}$ defined by $\varphi$ satisfies weak cancellation on $\cF_{m}(q)\times\cF_{n}(q)$.  
\end{thm}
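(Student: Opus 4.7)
The two cancellation laws (\ref{eq:be}) and (\ref{eq:al}) are obtained from each other by exchanging the roles of the two variables of $\varphi$ together with $m\leftrightarrow n$, so I would write up (\ref{eq:al}) in detail and remark at the end that (\ref{eq:be}) follows by the same argument with $x$ and $y$ interchanged. Take $\alpha,\alpha'\in\cF_m(q)$ and $\beta\in\cF_n(q)$ with $\varphi(\alpha,\beta)=\varphi(\alpha',\beta)$, and group $\varphi$ by powers of $y$:
\[
\varphi(x,y)=\sum_{j=0}^{\deg_y\varphi}\psi_j(x)\,y^j,\qquad \psi_j\in\F_q[x],\ \deg\psi_j\leq\deg_x\varphi<m_1.
\]

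The first step is to extract, from the single scalar equation $\sum_j(\psi_j(\alpha)-\psi_j(\alpha'))\beta^j=0$, the family of coefficient equalities $\psi_j(\alpha)=\psi_j(\alpha')$ for every $j$. This is where coprimality enters: by Lemma~\ref{lem:e3b} the element $\beta\in\cF_n(q)$ actually lies in $\cF_n(q^m)$, so $1,\beta,\dots,\beta^{n-1}$ are $\F_{q^m}$-linearly independent; together with $\deg_y\varphi<n_1\leq n$ and $\psi_j(\alpha)-\psi_j(\alpha')\in\F_{q^m}$, this forces every coefficient to vanish.

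Since $\deg_x\varphi\geq1$, at least one of the polynomials $\psi_j$, call it $\psi_{j_0}$, has positive degree $d<m_1$. My plan for the finale is to feed $\psi_{j_0}$ into the lemma immediately preceding Lemma~\ref{dokuritu}, which grants that $\psi_{j_0}$ satisfies the restricted injectivity (\ref{eq:(1)}) on $\cF_m(q)$, and then to close by writing
\[
0=\psi_{j_0}(\alpha)-\psi_{j_0}(\alpha')=\sum_{i=1}^{d}a_i\bigl(\alpha^i-\alpha'^i\bigr)
\]
and using Lemma~\ref{dokuritu} to rule out a nontrivial linear relation of this form, forcing $\alpha=\alpha'$.

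I expect the main obstacle to be this last conversion, since Lemma~\ref{dokuritu} is stated for differences of the form $\alpha^i-\sigma^k(\alpha)^i$, not $\alpha^i-\alpha'^i$ with $\alpha'$ an arbitrary element of $\cF_m(q)$. Closing this gap will presumably combine restricted injectivity (which places $\psi_{j_0}(\alpha)$ in $\cF_m(q)$, so that $\F_q(\psi_{j_0}(\alpha))=\F_{q^m}$) with the degree bound $[\F_q(\alpha):\F_q(\psi_{j_0}(\alpha))]\leq d<m_1$ coming from Lemma~\ref{degjisu}; these two pieces of information must be assembled to pin $\alpha$ down among the roots of $\psi_{j_0}(x)-\psi_{j_0}(\alpha)$ lying in $\F_{q^m}$.
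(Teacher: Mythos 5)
Your opening step is sound and is exactly the paper's: write $\varphi(x,y)=\sum_j\psi_j(x)y^j$, note $\psi_j(\alpha)-\psi_j(\alpha')\in\F_{q^m}$, and invoke Lemma~\ref{lem:e3b} (this is where $\gcd(m,n)=1$ enters) to split the single relation into $\psi_j(\alpha)=\psi_j(\alpha')$ for every $j$. The gap you flag at the end, however, is not a technical loose end but a fatal one for the statement as you have set it up: cancellation (\ref{eq:al}) against an \emph{arbitrary} $\alpha'\in\cF_m(q)$ is actually false under the degree hypotheses of Theorem~\ref{jisucan}. Take any $q$, any $m$ whose smallest prime divisor satisfies $m_1\geq 3$, any $n>1$ prime to $m$, and $\varphi(x,y)=(x^2+x)y$, so that $0<\deg_x\varphi=2<m_1$ and $0<\deg_y\varphi=1<n_1$. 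For $\alpha\in\cF_m(q)$ put $\alpha'=-1-\alpha$; then $\alpha'\in\cF_m(q)$, $\alpha'\neq\alpha$, and $\alpha'^2+\alpha'=\alpha^2+\alpha$, hence $\varphi(\alpha,\beta)=\varphi(\alpha',\beta)$ for every $\beta\in\cF_n(q)$. Consequently no combination of Lemma~\ref{degjisu} with restricted injectivity can ``pin $\alpha$ down among the roots of $\psi_{j_0}(x)-\psi_{j_0}(\alpha)$'': that polynomial genuinely can have several distinct roots in $\cF_m(q)$, and restricted injectivity only rules out the Frobenius conjugates of $\alpha$ among them, not arbitrary elements.

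What the paper proves --- and what is all that Theorem~\ref{B-CS} ever uses, since in the claim $K\cap M=K\cap N=1$ the element compared with $\alpha$ is always $g(\alpha)$ for $g$ a power of $\sigma$ --- is the conjugate form (\ref{eq:L}), (\ref{eq:R}): $\varphi(\alpha,\beta)=\varphi(\sigma^k(\alpha),\beta)$ forces $\sigma^k(\alpha)=\alpha$. (The paper's proof of Theorem~\ref{jisucan} opens by declaring these two implications to be what must be shown, so this is how ``weak cancellation'' is to be read there.) For that statement your outline does close up, precisely because of the $\sigma^k$-invariance you lose when $\alpha'$ is arbitrary: if $\alpha'=\sigma^k(\alpha)$ with $m\nmid k$, then $\psi_{j_0}(\alpha)=\psi_{j_0}(\sigma^k(\alpha))=\sigma^k(\psi_{j_0}(\alpha))$ places $\psi_{j_0}(\alpha)$ in the proper subfield $\F_{q^{\gcd(k,m)}}$ of $\F_{q^m}$, and then $\alpha$ is a root of the nonzero polynomial $\psi_{j_0}(x)-\psi_{j_0}(\alpha)$ of degree $<m_1\leq m/\gcd(k,m)=[\F_{q^m}:\F_{q^{\gcd(k,m)}}]$ with coefficients in that subfield, contradicting the degree of the minimal polynomial of $\alpha$ over $\F_{q^{\gcd(k,m)}}$. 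This is exactly the content of Lemma~\ref{dokuritu} (equivalently, of the restricted-injectivity lemma preceding it, after scaling $\psi_{j_0}$ to be monic). The repair of your write-up is therefore to restrict the goal to $\alpha'=\sigma^k(\alpha)$ and $\beta'=\sigma^k(\beta)$ before the final step; with that restriction your argument coincides with the paper's.
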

\begin{proof}
We need to show
\begin{align}
\varphi(\alpha,\beta)&=\varphi(\sigma^{k}(\alpha),\beta)
\Longrightarrow\alpha=\sigma^{k}(\alpha),
\label{eq:L}\\
\varphi(\alpha,\beta)&=\varphi(\alpha,\sigma^{k}(\beta))
\Longrightarrow\beta=\sigma^{k}(\beta).
\label{eq:R}
\end{align}
It suffices to show only (\ref{eq:L}), as the proof of (\ref{eq:R}) is similar. 
Suppose $\alpha\in\cF_{m}(q)$, $\beta\in\cF_{n}(q)$, $k\in\mathbb{Z}$. 
Let  
\begin{equation}\label{eq:fai}
\varphi(x,y)=\sum_{i=0}^{n_{1}-1}\psi_{i}(x)y^{i},
\end{equation}
\begin{equation}\label{eq:pusai}
\psi_{i}(x)=\sum_{j=0}^{m_{1}-1}a_{ij}x^{j} \quad (0\leq i\leq n_{1}-1).
\end{equation}
If  
$\varphi(\alpha,\beta)=\varphi(\sigma^{k}(\alpha),\beta)$, then, by Lemma~\ref{lem:e3b}, 
$\psi_{i}(\alpha)-\psi_{i}(\sigma^{k}(\alpha))=0$ for $0\leq i\leq n_{1}-1$. 
This implies  
\begin{equation*}
\sum_{j=1}^{m_{1}-1}a_{ij}(\alpha^{j}-\sigma^{k}(\alpha^j))=0 \quad (0\leq i\leq n_{1}-1).   
\end{equation*}
If $\alpha\neq\sigma^k(\alpha)$, then $k$ is not divisible by $m$. Then by 
Lemma~\ref{dokuritu}, we obtain 
$a_{ij}=0$ for $0\leq i\leq n_{1}-1$ and $1\leq j\leq m_{1}-1$.
This implies
$\deg_{x}\varphi=0$, which contradicts the assumption. Therefore,  
$\alpha=\sigma^k(\alpha)$.
\end{proof}

\section{Irreducible polynomials all of whose coefficients except the constant term belong to the prime field}\label{sec:mirai}

In this section, we show that the hypotheses $\deg_{x}\varphi<m_{1}$ and $\deg_{y}\varphi<n_{1}$ in Theorem~\ref{jisucan} are necessary.
We believe that these upper bounds cannot be relaxed for any prime power $q$ and relatively prime positive integers $m$ and $n$. This leads to a conjecture on the existence of irreducible polynomials all of whose coefficients except the constant term belong to the prime field.

\begin{proposition}\label{lem:e3}
Let $m$ and $n$ be relatively prime integers with $m,n>1$.
Let $m_1$ and $n_1$ be the smallest prime divisor of $m$ and $n$,
respectively. 
Then the following are equivalent:
\begin{enumerate}
\item there exists a polynomial
$\varphi(x,y)\in\F_q[x,y]$ with
$\deg_x\varphi=m_1$, $0<\deg_y\varphi<n_1$,
such that 
$\sigma^k(\alpha)\neq\alpha$ and
$\varphi(\sigma^k(\alpha),\beta)=\varphi(\alpha,\beta)$
for some
$\alpha\in\cF_m(q)$ and $\beta\in\cF_n(q)$,
\item there exists a polynomial
$\psi(x)\in\F_q[x]$ with $\deg\psi=m_1$ which fails to
satisfy the restricted injectivity on $\cF_m(q)$,
\item 
$\cF_{m/m_1}(q)\cap\{\alpha^{m_1}+\sum_{i=1}^{m_1-1}c_i\alpha^i\mid
\alpha\in\cF_m(q),\;c_1,\dots,c_{m_1-1}\in\F_q\}\neq\emptyset$, 
\item 
there exists a monic irreducible polynomial $f(x)\in\F_{q^{m/m_{1}}}[x]$ of degree $m_{1}$ such that $f(x)-f(0)\in\F_{q}[x]$ and $f(0)\in\cF_{m/m_{1}}(q)$. 
\end{enumerate}
\end{proposition}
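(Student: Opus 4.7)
The four statements describe the same obstruction at increasing levels of concreteness---a bivariate polynomial, a univariate polynomial, an explicit element of a prescribed subset, and an explicit irreducible polynomial over $\F_{q^{m/m_1}}$. The plan is to establish (i)$\iff$(ii), (ii)$\iff$(iii), and (iii)$\iff$(iv) in turn.

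For (i)$\iff$(ii), expand $\varphi(x,y)=\sum_{i=0}^{n_1-1}\psi_i(x)y^i$. In the forward direction, Lemma~\ref{lem:e3b} converts the hypothesis $\varphi(\alpha,\beta)=\varphi(\sigma^k(\alpha),\beta)$ into $\psi_i(\alpha)=\psi_i(\sigma^k(\alpha))$ for each $i$; since $\deg_x\varphi=m_1$, at least one $\psi_i$ has degree $m_1$ and witnesses a failure of restricted injectivity. In the reverse direction, take $\varphi(x,y):=\psi(x)\,y$: because $n>1$ forces $n_1\ge 2$, the conditions $0<\deg_y\varphi=1<n_1$ and $\deg_x\varphi=m_1$ are satisfied, and $\psi(\alpha)=\psi(\sigma^k(\alpha))$ transfers directly to $\varphi(\alpha,\beta)=\varphi(\sigma^k(\alpha),\beta)$ for any $\beta\in\cF_n(q)$.

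For (ii)$\iff$(iii), the crucial observation is that any $\psi\in\F_q[x]$ commutes with $\sigma$, so failure of restricted injectivity amounts to $\sigma^k(\psi(\alpha))=\psi(\alpha)$ with $\sigma^k(\alpha)\neq\alpha$, i.e.\ $\psi(\alpha)\in\F_{q^d}$ for $d=\gcd(k,m)<m$. Lemma~\ref{degjisu} gives $[\F_q(\alpha):\F_q(\psi(\alpha))]\le m_1$, so $[\F_q(\psi(\alpha)):\F_q]\ge m/m_1$; being a proper divisor of $m$ that is at least $m/m_1$, and with $m_1$ the smallest prime divisor of $m$, this index must equal exactly $m/m_1$, so $\psi(\alpha)\in\cF_{m/m_1}(q)$. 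Writing $\psi(x)=ax^{m_1}+b_0+\sum_{j=1}^{m_1-1}b_jx^j$ with $a\neq0$, the element $(\psi(\alpha)-b_0)/a=\alpha^{m_1}+\sum_{i=1}^{m_1-1}(b_i/a)\alpha^i$ has the form required in (iii) and still lies in $\cF_{m/m_1}(q)$, because the $\F_q$-affine map $z\mapsto(z-b_0)/a$ preserves $\cF_{m/m_1}(q)$. Conversely, given $\gamma=\alpha^{m_1}+\sum c_i\alpha^i\in\cF_{m/m_1}(q)$, set $\psi(x):=x^{m_1}+\sum c_ix^i$ and $k:=m/m_1<m$; then $\sigma^k(\alpha)\neq\alpha$ since $\alpha\in\cF_m(q)$, while $\psi(\sigma^k(\alpha))=\sigma^k(\psi(\alpha))=\sigma^k(\gamma)=\gamma=\psi(\alpha)$.

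For (iii)$\iff$(iv), given $\gamma$ as in (iii), define $f(x):=x^{m_1}+\sum_{i=1}^{m_1-1}c_ix^i-\gamma\in\F_{q^{m/m_1}}[x]$; then $f(\alpha)=0$, $f(0)=-\gamma\in\cF_{m/m_1}(q)$, and $f-f(0)\in\F_q[x]$, and irreducibility of $f$ follows because $[\F_{q^{m/m_1}}(\alpha):\F_{q^{m/m_1}}]=[\F_{q^m}:\F_{q^{m/m_1}}]=m_1=\deg f$ forces the minimal polynomial of $\alpha$ over $\F_{q^{m/m_1}}$ to equal $f$. Conversely, given $f$ as in (iv) with root $\alpha$, writing $f(x)=x^{m_1}+\sum_{i=1}^{m_1-1}c_ix^i+f(0)$ yields $\alpha^{m_1}+\sum c_i\alpha^i=-f(0)\in\cF_{m/m_1}(q)$; since $-f(0)\in\F_q(\alpha)$ generates $\F_{q^{m/m_1}}$ over $\F_q$, one obtains $\F_q(\alpha)\supseteq\F_{q^{m/m_1}}(\alpha)$, and combined with the irreducibility of $f$ of degree $m_1$ over $\F_{q^{m/m_1}}$ this forces $\F_q(\alpha)=\F_{q^m}$, so $\alpha\in\cF_m(q)$. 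The most delicate point is the forward direction (ii)$\implies$(iii), where the strict equality $[\F_q(\psi(\alpha)):\F_q]=m/m_1$ must be extracted from the weaker degree bound in Lemma~\ref{degjisu} by invoking the minimality of the prime $m_1$.
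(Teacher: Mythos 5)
Your proof is correct and takes essentially the same route as the paper: the substantive implications --- (i)$\implies$(ii) via Lemma~\ref{lem:e3b}, (ii)$\implies$(iii) via Lemma~\ref{degjisu} together with minimality of $m_1$, and (iii)$\implies$(iv) via the minimal-polynomial argument --- coincide with the paper's. The only difference is organizational: the paper closes a single cycle with (iv)$\implies$(i) using $\varphi(x,y)=(f(x)-f(0))y$, whereas you prove the three pairwise equivalences, and your easy reverse directions (e.g.\ $\varphi(x,y)=\psi(x)y$ and the root argument for (iv)$\implies$(iii)) simply repackage that same construction.
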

\begin{proof}
(i)$\implies$(ii). Let $\varphi(x)$ be as in (\ref{eq:fai}), 
where $\psi_i(x)\in\F_q[x]$ for $0\leq i\leq n_1-1$. 
Then by Lemma~\ref{lem:e3b}, 
$\psi_i(\sigma^k(\alpha))=\psi_i(\alpha)$ for $0\leq i\leq n_1-1$.
By the assumption,
there exists $i\in\{0,1,\dots,n_1-1\}$ such that $\deg\psi_i=m_1$, 
and this $\psi_i$ fails to satisfy the restricted injectivity
on $\cF_m(q)$.

(ii)$\implies$(iii).
We may assume without loss of generality that $\psi$ is monic.
Replacing $\psi(x)$ by $\psi(x)-\psi(0)$, we may further assume that 
$\psi(0)=0$.
By the assumption, there exists $\alpha\in\cF_m(q)$ and $k\in\mathbb{Z}$ such that
$\sigma^k(\alpha)\neq\alpha$ and 
$\psi(\sigma^k(\alpha))=\psi(\alpha)$.
Since $\psi(x)\in\F_q[x]$, the latter implies
$\sigma^k(\psi(\alpha))=\psi(\alpha)$.
Thus $\psi(\alpha)\in\F_{q^{\gcd(k,m)}}$.
Since $\sigma^k(\alpha)\neq\alpha$, $k$ is not a multiple of $m$.
This implies that
$\F_{q^{\gcd(k,m)}}$ is a proper subfield of $\F_{q^m}$.
Therefore, there exists a divisor $d>1$ of $m$ such that
$\psi(\alpha)\in\F_{q^{m/d}}$.
By Lemma~\ref{degjisu}, we have
\begin{align*}
m_1&\geq [\F_q(\alpha):\F_q(\psi(\alpha))]
\\&\geq
[\F_{q^{m}}:\F_{q^{m/d}}]
\nexteq
d.
\end{align*}
Since $m_1$ is the smallest prime divisor of $m$, we obtain $m_1=d$.
This forces $\F_q(\psi(\alpha))=\F_{q^{m/m_1}}$, and hence
$\psi(\alpha)\in\cF_{m/m_1}(q)$.

(iii)$\implies$(iv).
Suppose $\alpha\in\cF_m(q)$, $c_1,\dots,c_{m_1-1}\in\F_q$,
and 
\[c_{0}=\alpha^{m_1}+\sum_{i=1}^{m_1-1}c_i\alpha^i\in\cF_{m/m_1}(q).\]
Define 
$$f(x)=x^{m_{1}}+\sum_{i=1}^{m_{1}-1}c_{i}x^{i}-c_{0}\in\F_{q^{m/m_{1}}}[x].$$
Then $f(x)-f(0)\in\F_{q}[x]$ and $f(0)=-c_{0}\in\cF_{m/m_{1}}(q)$. We claim $f(x)$ is irreducible in $\F_{q^{m/m_{1}}}[x]$. 
Indeed, since $f(\alpha)=0$, $f(x)$ is divisible by the minimal 
polynomial of $\alpha$ over $\F_{q^{m/m_{1}}}$.
On the other hand, since 
$\F_{q^{m/m_{1}}}(\alpha)\supset\F_{q}(\alpha)=\F_{q^{m}}$,
the minimal polynomial of $\alpha$ over $\F_{q^{m/m_{1}}}$
has degree at least $[\F_{q^m}:\F_{q^{m/m_1}}]=m_1=\deg f$. 
Therefore, $f(x)$ is the
minimal polynomial of $\alpha$ over $\F_{q^{m/m_{1}}}$,
and hence is irreducible $\F_{q^{m/m_{1}}}[x]$. 

(iv)$\implies$(i). 
Define 
\begin{align*}
k&=\frac{m}{m_1},\\
\varphi(x,y)&=(f(x)-f(0))y\in\F_{q}[x,y].
\end{align*}
Then, $\deg_{x}\varphi=m_{1}$, $\deg_{y}\varphi=1$.
Let $\alpha$ be a root of $f(x)$. 
Since $f(0)=-(f(\alpha)-f(0))\in\F_q(\alpha)$, we have
$\F_{q}(\alpha)=\F_{q}(f(0),\alpha)=\F_{q^{m/m_{1}}}(\alpha)
=\F_{q^m}$. 
Thus $\alpha\in\cF_{m}(q)$. 
Moreover, for an arbitrary $\beta\in\cF_n(q)$, we have
\begin{align*}
\varphi(\sigma^k(\alpha),\beta)&=
\sigma^k(f(\alpha)-f(0))\beta
\nexteq
-\sigma^k(f(0))\beta
\nexteq
-f(0)\beta 
\nexteq
(f(\alpha)-f(0))\beta 
\nexteq
\varphi(\alpha,\beta).
\end{align*}
\end{proof}

Proposition~\ref{lem:e3} shows that hypotheses $0<\deg_{x}\varphi<m_{1}$ and $0<\deg_{y}\varphi<n_{1}$ in Theorem~\ref{jisucan} are best possible, provided that any of the four equivalent conditions are satisfied. We conjecture that this is always the case.

\begin{conj}\label{acc}
Let $q$ be a prime power, and let $k,l$ be positive integers. Then 
there exists a monic irreducible polynomial $f(x)\in\F_{q^{k}}[x]$ of degree $l$ such that $f(x)-f(0)\in\F_{q}[x]$ and $f(0)\in\cF_{k}(q)$. 
\end{conj}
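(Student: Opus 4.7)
My plan is to attack Conjecture~\ref{acc} by a character-sum counting argument. Let
\[
\mathcal{P}(q,k,l) = \bigl\{\, x^l + a_{l-1}x^{l-1} + \cdots + a_1 x + c : a_1,\dots,a_{l-1}\in\F_q,\ c\in\cF_k(q)\,\bigr\},
\]
and let $I$ be the number of its elements that are irreducible over $\F_{q^k}$. Passing from polynomials to their $l$ distinct roots,
\[
l\cdot I = \#\bigl\{\alpha\in\F_{q^{kl}} : \F_{q^k}(\alpha)=\F_{q^{kl}},\ \mu_\alpha\in\mathcal{P}(q,k,l)\bigr\},
\]
where $\mu_\alpha$ denotes the minimal polynomial of $\alpha$ over $\F_{q^k}$. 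Membership $\mu_\alpha\in\mathcal{P}$ splits into the $l-1$ conditions that $e_j(\alpha,\sigma^k(\alpha),\dots,\sigma^{(l-1)k}(\alpha))\in\F_q$ for $1\le j\le l-1$ (with $e_j$ the elementary symmetric polynomial), together with the condition that the constant term, equal to $(-1)^l N_{\F_{q^{kl}}/\F_{q^k}}(\alpha)$, lies in $\cF_k(q)$.

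I would encode each constraint by characters or M\"obius inversion: membership of an element of $\F_{q^k}$ in $\F_q$ by averaging additive characters of $\F_{q^k}$ trivial on $\F_q$; the generating condition $\F_{q^k}(\alpha)=\F_{q^{kl}}$ by M\"obius inversion on divisors of $l$; and membership of the norm in $\cF_k(q)$ by M\"obius inversion on divisors of $k$. This rewrites $l\cdot I$ as a linear combination of character sums of the form $\sum_{\alpha\in\F_{q^{kd}}}\psi(F(\alpha))$, with $\psi$ an additive character and $F$ a polynomial in $\alpha$ assembled from the elementary symmetric functions of the Frobenius conjugates. The trivial-character terms combine to give the expected main term, of heuristic order $q^{k+l-1}/l$, which is positive for every $(q,k,l)$.

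The main obstacle is that the polynomials $F$ arising in these sums have very high degree in $\alpha$: because $e_j(\alpha,\alpha^{q^k},\dots)$ involves exponents as large as $\sum_{i=1}^{j}q^{k(l-i)}$, the polynomial $F$ is of degree on the order of $q^{k(l-1)}$. Naive application of Weil's bound then yields an error term that overwhelms the main term. A successful proof would therefore require either reformulating these sums on auxiliary varieties of controlled dimension (in the spirit of Katz's $\ell$-adic framework) or exploiting structural cancellation specific to elementary symmetric functions of a Frobenius orbit; base cases beyond the reach of any asymptotic method would be dispatched by explicit construction or exhaustive search. This character-sum difficulty parallels longstanding open problems on irreducibles with many prescribed coefficients, which is plausibly why the authors record the statement only as a conjecture.
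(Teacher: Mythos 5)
The statement you are addressing is not proved in the paper at all: it is stated as Conjecture~\ref{acc}, and the authors only establish special cases (Proposition~\ref{prop:C1} for $l=p$ via Artin--Schreier polynomials $x^{l}-x-a$ with $\Tr_{\F_{p^k}}(a)=1$, and Proposition~\ref{prop:C2} for $l$ whose prime factors divide $p^k-1$ via binomials $x^l-a$ with $a$ primitive), together with a computer verification for $p^{kl}\le 10^{20}$. So there is no ``paper proof'' for your attempt to match, and the relevant question is whether your argument actually closes the conjecture. It does not: what you have written is a program, not a proof, and you concede the decisive point yourself. The reduction of $l\cdot I$ to character sums is routine; the entire content of the problem is the estimation of the nontrivial sums $\sum_{\alpha}\psi(F(\alpha))$, and there your argument stops. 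The polynomials $F$ built from $e_j(\alpha,\sigma^k(\alpha),\dots,\sigma^{(l-1)k}(\alpha))$ for $j\ge 2$ have degree of order $q^{k(l-1)}$, which exceeds the size of the field being summed over, so the Weil bound is vacuous (only the $j=1$ condition, being a trace and hence $\F_q$-linear in $\alpha$, is genuinely tractable by additive characters). Invoking ``auxiliary varieties of controlled dimension'' or ``structural cancellation specific to elementary symmetric functions'' names a hoped-for tool without supplying it; this is exactly the known barrier in the problem of prescribing many coefficients of irreducible polynomials, which is why the statement remains a conjecture.

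A secondary issue: even the combinatorial skeleton needs more care than the sketch indicates. The condition $f(0)\in\cF_k(q)$ is a condition on $N_{\F_{q^{kl}}/\F_{q^k}}(\alpha)$ generating $\F_{q^k}$ over $\F_q$, and handling it by M\"obius inversion over divisors of $k$ interacts with the inversion over divisors of $l$ (for $\F_{q^k}(\alpha)=\F_{q^{kl}}$) and with the coefficient constraints; one must check that after all inversions the trivial-character main term really is of order $q^{k+l-1}/l$ and is not cancelled by the subfield corrections. That is plausible but unverified in your write-up. In short: the approach is a reasonable heuristic framework, consistent with the paper's view that the conjecture is a hard coefficient-prescription problem, but it contains a genuine gap at the character-sum estimate and therefore proves nothing beyond what the paper already records; if you want unconditional results, the paper's route of explicit families (Artin--Schreier and binomial polynomials) is currently the only one that succeeds, and only for special $(k,l)$.
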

Note that Conjecture~\ref{acc} is slightly stronger than Proposition~\ref{lem:e3}(iv) in the sense that $l$ is not necessarily the smallest prime divisor of $kl$. 

\begin{conj}\label{acc2}
Let $p$ be a prime, and let $k,l$ be positive integers. Then 
there exists a monic irreducible polynomial $f(x)\in\F_{p^{k}}[x]$ of degree $l$ such that $f(x)-f(0)\in\F_{p}[x]$ and $f(0)\in\cF_{k}(p)$. 
\end{conj}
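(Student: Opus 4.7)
My plan is to reduce Conjecture~\ref{acc2} to a counting problem and attack it via additive character sums. Setting $g(x) = f(x) - f(0)$ and $\beta = -f(0)$, a polynomial $f$ as required corresponds to a pair $(g, \beta)$ with $g \in \F_p[x]$ monic of degree $l$, $g(0) = 0$, and $\beta \in \cF_k(p)$, such that $g(x) - \beta$ is irreducible over $\F_{p^k}$. The latter is equivalent to the existence of a root $\alpha$ with $[\F_{p^k}(\alpha) : \F_{p^k}] = l$, i.e., $\alpha \in \cF_l(p^k)$, satisfying $g(\alpha) = \beta$. Hence the conjecture reduces to showing: for every prime $p$ and all positive integers $k, l$, there exist $\alpha \in \cF_l(p^k)$ and monic $g(x) = x^l + \sum_{i=1}^{l-1} c_i x^i \in \F_p[x]$ with $g(\alpha) \in \cF_k(p)$. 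Geometrically, for each $\alpha \in \cF_l(p^k)$ the possible values $g(\alpha)$ sweep out the $(l-1)$-dimensional affine $\F_p$-subspace $S_\alpha = \alpha^l + \operatorname{span}_{\F_p}(\alpha, \alpha^2, \ldots, \alpha^{l-1})$ of $\F_{p^{kl}}$ (using that $[\F_p(\alpha):\F_p] \geq l$, as follows from $\operatorname{lcm}([\F_p(\alpha):\F_p], k) = kl$), and the problem becomes: find some $\alpha \in \cF_l(p^k)$ with $S_\alpha \cap \cF_k(p) \neq \emptyset$.

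I would then try to establish positivity of
\[
N = \left|\{(\alpha, g) : \alpha \in \cF_l(p^k),\ g \in \F_p[x] \text{ monic of degree } l,\ g(0) = 0,\ g(\alpha) \in \cF_k(p)\}\right|.
\]
Using M\"obius inversion to expand both $\cF_l(p^k)$ and $\cF_k(p)$ as alternating sums over subfields, $N$ becomes a double sum indexed by $d \mid l$ and $e \mid k$ of counts of pairs $(\alpha, g)$ with $\alpha \in \F_{p^{kd}}$ and $g(\alpha) \in \F_{p^e}$. The subfield condition on $g(\alpha)$ can be encoded via additive characters of $\F_{p^{kd}}$ trivial on $\F_{p^e}$: the trivial character contributes the main term, while each nontrivial character yields a polynomial character sum $\sum_{\alpha \in \F_{p^{kd}}} \chi(g(\alpha))$ bounded by $(l - 1) p^{kd/2}$ via Weil's theorem. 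A heuristic density count gives $N \approx p^{k + l - 1} / l$; making this rigorous amounts to showing the accumulated Weil errors do not swamp the main term after the M\"obius unwinding.

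The principal obstacle, in my view, is sparsity: the family $\{(g, \beta)\}$ has size $p^{l-1} |\cF_k(p)| \approx p^{k+l-1}$ within the $p^{kl}$ monic polynomials of degree $l$ over $\F_{p^k}$, so the Weil bound $p^{kl/2}$ per character can easily overwhelm the main term $p^{k+l-1}/l$ when $k$ or $l$ is small. Small cases (especially $p = 2$ with small $k, l$) would almost certainly need direct verification or tailored algebraic arguments. A complementary approach would be to target one-parameter subfamilies such as $f(x) = x^l + a x - \beta$ with $a \in \F_p$, or binomials $f(x) = x^l - \beta$, where classical irreducibility criteria (e.g., the theorem on $x^n - a$ in Lidl--Niederreiter, using the multiplicative order of $\beta$ in $\F_{p^k}^*$) can often confirm the conjecture directly for large swaths of $(p, k, l)$. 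Combining such explicit constructions for small parameters with the character-sum estimate for large parameters appears to be the most promising route toward a full proof.
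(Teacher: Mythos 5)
Be aware that the statement you were asked to prove is stated in the paper as a \emph{conjecture}: the paper offers no proof of Conjecture~\ref{acc2}, only the special cases in Propositions~\ref{prop:C1} ($l=p$, via Artin--Schreier polynomials $x^{l}-x-a$ with $a\in\cF_k(p)$, $\Tr(a)=1$) and \ref{prop:C2} ($l$ with all prime factors dividing $p^k-1$, via binomials $x^l-a$ with $a$ primitive), together with computer verification for $p^{kl}\le 10^{20}$. Your proposal, as you yourself acknowledge, is a research plan rather than a proof, so it cannot be counted as settling the statement. Your reduction is correct: $f=g-\beta$ with $g\in\F_p[x]$ monic, $g(0)=0$, $\beta\in\cF_k(p)$ is irreducible over $\F_{p^k}$ exactly when some $\alpha\in\cF_l(p^k)$ satisfies $g(\alpha)=\beta$, and your explicit-family suggestions (binomials $x^l-\beta$ via the Lidl--Niederreiter criterion, trinomial-type families) essentially recover the paper's two propositions.

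The genuine gap is in the analytic half, and it is worse than your caveat about ``small $k$ or $l$'' suggests: the character-sum strategy as you describe it fails for essentially all $(k,l)$ with $l\ge 2$, uniformly in $p$. For a fixed $g$, detecting $g(\alpha)\in\F_{p^k}$ for $\alpha$ ranging over $\F_{p^{kl}}$ uses $p^{kl-k}-1$ nontrivial additive characters, giving a main term of size $p^{k}$ against a Weil error of size up to $(l-1)p^{kl/2}$ per character combination; after summing over the $p^{l-1}$ choices of $g$ the main term is $\asymp p^{k+l-1}/l$ while the error is $\asymp l\,p^{kl/2+l-1}$, and $k+l-1 < kl/2+l-1$ whenever $l\ge 3$ (with equality of exponents at $l=2$). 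So no choice of parameters is ``large enough'' for the naive bound to win; the sparsity of the family is structural, not a boundary effect. To rescue the approach you would need extra cancellation from averaging over the coefficients $c_1,\dots,c_{l-1}$ \emph{before} applying Weil (the inner sum $\sum_{c_i\in\F_p}\chi(c_i\alpha^i)$ vanishes unless $\chi$ is trivial on the line $\F_p\alpha^i$), which turns the problem into counting characters trivial on the $\F_p$-span of $\alpha,\dots,\alpha^{l-1}$ as $\alpha$ varies --- a question that is not answered by standard Weil-type estimates and is, in effect, the open content of the conjecture. Absent that, your proposal supplies evidence and partial cases already covered by the paper, but not a proof.
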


Clearly, validity of Conjecture~\ref{acc} for all prime power $q$
implies that of Conjecture~\ref{acc2}. 
Conversely, suppose that Conjecture~\ref{acc2} is true. 
Let $q=p^r$, where $p$ is a prime. 
Then there exists a monic irreducible polynomial $f(x)\in\F_{p^{rk}}[x]$ of degree $l$ such that $f(x)-f(0)\in\F_{p}[x]$ and $f(0)\in\cF_{rk}(p)$. In particular, $f(x)$ is a monic irreducible polynomial in $\F_{q^{k}}[x]$ of degree $l$ such that $f(x)-f(0)\in\F_{q}[x]$ and $f(0)\in\cF_{k}(q)$ by \eqref{honghai}.
Therefore, the two conjectures are equivalent.

The existence problem of a monic irreducible polynomial
of two prescribed coefficients dates back to Carlitz~\cite{C}. 
See \cite[Part~II, Section~3.5]{H} for more recent work.
Conjecture~\ref{acc2} is a similar but different problem,
in the sense that all coefficients except the constant term
are required to be in the prime field.

Conjecture~\ref{acc2} is trivially true for $l=1$ or $k=1$.
Moreover, it is true for the following special cases:

\begin{proposition}\label{prop:C1}
Conjecture~\ref{acc2} is true if $l=p$.
\end{proposition}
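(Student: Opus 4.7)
The plan is to combine the Artin--Schreier construction with the normal basis theorem. I would look for $f$ of the form $f(x)=x^p-x-\alpha$ with $\alpha\in\F_{p^k}$; such an $f$ automatically satisfies $f(x)-f(0)=x^p-x\in\F_p[x]$, so only two conditions on $\alpha$ remain: $f$ must be irreducible over $\F_{p^k}$, and $f(0)=-\alpha$ must lie in $\cF_k(p)$.

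The first step is the following reduction: it suffices to find $\alpha\in\cF_k(p)$ with $\Tr_{\F_{p^k}/\F_p}(\alpha)\neq 0$. Indeed, $-\alpha\in\cF_k(p)$ then automatically; and the Artin--Schreier criterion says that $x^p-x-\alpha$ is irreducible over $\F_{p^k}$ if and only if $\alpha$ is not of the form $b^p-b$ for any $b\in\F_{p^k}$. The map $b\mapsto b^p-b$ on $\F_{p^k}$ is $\F_p$-linear with kernel $\F_p$, so its image has $p^{k-1}$ elements and is contained in $\ker\Tr_{\F_{p^k}/\F_p}$ (since $\Tr(b^p)=\Tr(b)$). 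A cardinality comparison then forces the image to equal $\ker\Tr_{\F_{p^k}/\F_p}$, and the reduction follows.

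The second step is to produce such an $\alpha$, and this is where I would invoke the normal basis theorem for $\F_{p^k}/\F_p$: there exists $\alpha\in\F_{p^k}$ such that $\alpha,\sigma(\alpha),\ldots,\sigma^{k-1}(\alpha)$ form an $\F_p$-basis of $\F_{p^k}$. Since $\sigma^i(\alpha)=\alpha^{p^i}\in\F_p(\alpha)$, this basis lies inside $\F_p(\alpha)$, forcing $\F_p(\alpha)=\F_{p^k}$ and hence $\alpha\in\cF_k(p)$. Moreover $\Tr_{\F_{p^k}/\F_p}(\alpha)=\alpha+\sigma(\alpha)+\cdots+\sigma^{k-1}(\alpha)\neq 0$, for otherwise this sum would constitute a nontrivial $\F_p$-linear dependence among the basis vectors.

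The only nontrivial ingredient is the normal basis theorem, which is standard (see, e.g., Lidl--Niederreiter); once it is invoked, everything else is a direct verification. The expected pitfall is the tiny case $k=1$: here the normal basis theorem merely yields $\alpha\in\F_p^\times$, but $x^p-x-\alpha$ remains Artin--Schreier irreducible over $\F_p$ because $b^p-b=0$ identically on $\F_p$, so $\alpha\neq 0$ suffices, and the conclusion persists.
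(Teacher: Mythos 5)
Your proof is correct, and it follows the same overall skeleton as the paper — an Artin--Schreier polynomial $x^p-x-\alpha$ with $\alpha$ a field generator of nonzero trace — but the key existence ingredient is genuinely different. The paper invokes a theorem of Carlitz to produce $a\in\cF_k(p)$ with $\Tr_{\F_{p^k}}(a)=1$, and then quotes the irreducibility criterion \cite[Corollary 3.79]{FF} directly; you instead obtain a generator with nonzero trace from the normal basis theorem (a normal element $\alpha$ spans $\F_{p^k}$ inside $\F_p(\alpha)$, so it generates, and $\Tr(\alpha)=\alpha+\sigma(\alpha)+\cdots+\sigma^{k-1}(\alpha)\neq 0$ since the all-ones combination of a basis cannot vanish), and you rederive the trace form of the Artin--Schreier criterion from the statement ``irreducible iff not of the form $b^p-b$'' via the observation that the image of $b\mapsto b^p-b$ equals $\ker\Tr_{\F_{p^k}/\F_p}$ by a cardinality count. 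Your route trades the Carlitz citation (which yields the sharper conclusion of prescribed trace $1$, more than is needed here) for the more standard normal basis theorem, making the argument essentially self-contained apart from the Artin--Schreier irreducibility statement; the paper's version is shorter because it cites both ingredients outright. Minor points, all handled correctly on your side: $f(0)=-\alpha$ lies in $\cF_k(p)$ exactly when $\alpha$ does, and the $k=1$ case needs no special treatment since your general argument (normal element $=$ nonzero element, image of $b\mapsto b^p-b$ trivial) already covers it.
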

\begin{proof}
It is known (see for example \cite{C}) that there exists 
$a\in\cF_{k}(p)$ such that $\Tr_{\F_{p^{k}}}(a)=1$.
Then, by \cite[Corollary 3.79]{FF}, 
$x^{l}-x-a$ is irreducible in $\F_{p^{k}}[x]$.
\end{proof}

\begin{proposition}\label{prop:C2}
Let $l$ be a positive integer each of whose prime factor 
divides $p^k-1$. Assume further that,
$p^k\equiv1\pmod4$ if $l\equiv0\pmod4$.
Then Conjecture~\ref{acc2} is true.
\end{proposition}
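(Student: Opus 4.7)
The plan is to invoke the classical irreducibility criterion for binomials over a finite field, namely \cite[Theorem~3.75]{FF}: for $a\in\F_{p^k}^*$ of multiplicative order $e$, the binomial $x^l-a$ is irreducible in $\F_{p^k}[x]$ if and only if (i)~every prime factor of $l$ divides $e$ but does not divide $(p^k-1)/e$, and (ii)~$p^k\equiv1\pmod{4}$ whenever $l\equiv0\pmod{4}$. The two hypotheses of the proposition are tailored to line up with (i) and (ii), provided one picks the right $a$.

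The key move is to take $a$ to be a generator of the cyclic group $\F_{p^k}^*$, so that $e=p^k-1$. With this choice, $(p^k-1)/e=1$ has no prime divisors, so condition~(i) reduces to the assertion that every prime factor of $l$ divides $p^k-1$, which is exactly the first hypothesis of the proposition. Condition~(ii) is the second hypothesis verbatim. Therefore $f(x):=x^l-a$ is irreducible in $\F_{p^k}[x]$.

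It then remains to verify the structural conditions on $f$. By construction $f(x)-f(0)=x^l\in\F_{p}[x]$ and $f(0)=-a$. Because $-1\in\F_{p}$, we have $\F_{p}(-a)=\F_{p}(a)=\F_{p^k}$, so $-a\in\cF_{k}(p)$, which is what is required.

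I do not expect any real obstacle: the entire argument is a direct application of \cite[Theorem~3.75]{FF}, and the substance of the proof is merely the observation that choosing $a$ to be a primitive element of $\F_{p^k}$ trivializes the ``$(p^k-1)/e$'' half of condition~(i), so that the remaining requirements match the proposition's hypotheses exactly.
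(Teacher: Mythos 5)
Your proof is correct and is essentially the paper's own argument: choose a primitive element $a$ of $\F_{p^k}$ and apply \cite[Theorem~3.75]{FF} to $x^l-a$. Your additional explicit check that $f(0)=-a\in\cF_k(p)$ and $f(x)-f(0)=x^l\in\F_p[x]$ is a welcome (if routine) completion of the details the paper leaves implicit.
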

\begin{proof}
Let $a$ be a primitive element of $\F_{p^k}$.
Then $x^l-a$ is irreducible in $\F_{p^k}[x]$ by
\cite[Theorem 3.75]{FF}.
\end{proof}

By Propositions~\ref{prop:C1} and \ref{prop:C2}, 
Conjecture~\ref{acc2} is true for $l=2$,
or $l=3$ and $k$ even.
We have verified Conjecture~\ref{acc2}
for $p^{kl}\leq 10^{20}$ by computer.


\begin{thebibliography}{9}
\bibitem{Br-Ca}
   J. V. Brawley, L. Carlitz.
   Irreducibles and the composed product for polynomials over a finite field, 
   Discrete Math. 65 (1987) 115--139. 
\bibitem{C}
   L.  Carlitz.
   A theorem of Dickson on irreducible polynomials, 
   Proc. Amer. Math. Soc. 3 (1952) 693-700.    
\bibitem{FF}
   R. Lidl, H. Niederreiter.
   \textit{Finite Fields},
   Cambridge University Press, 1997. 
\bibitem{H}
 G.L. Mullen and D. Panario, 
 \textit{Handbook of Finite Fields},
 CRC Press, Boca Raton, 2013.
\bibitem{Stichten}
   H. Stichtenoth. 
   A note on composed products of polynomials over finite fields.
   Des. Codes Cryptogr. 73, 27--32 (2014). 
\end{thebibliography}
\end{document}